\newcommand{\filename}{Effective-Local-Height-20-Oct-2025.tex} 
\renewcommand{\geq}{\geqslant}
\renewcommand{\leq}{\leqslant}
\newcommand{\Osh}{{\mathcal O}}                        
\newcommand{\spec}{\operatorname{Spec}}
\renewcommand{\emptyset}{\varnothing}
\newcommand{\KK}{\mathbf{K}}
\newcommand{\FF}{\mathbf{F}}
\renewcommand{\AA}{\mathbb{A}} 
\newcommand{\PP}{\mathbb{P}} 
\newcommand{\QQ}{\mathbb{Q}} 
\newcommand{\RR}{\mathbb{R}} 
\newtheorem{theorem}{Theorem}[section]
\newtheorem{lemma}[theorem]{Lemma}
\theoremstyle{definition}
\newtheorem{defn}[theorem]{Definition}
\newtheorem{remark}[theorem]{Remark}
\newtheorem{example}[theorem]{Example}
\numberwithin{equation}{section}
\title[Effective calculation of local Weil functions]{Effective calculation of local Weil functions via presentations of Cartier divisors}
\author{Nathan Grieve}
\address{Laboratory for Birational Geometry, Astronomy Mathematics Building 5F, No. 1, Sec. 4, Roosevelt Rd., Taipei 10617, Taiwan (R.O.C.);
School of Mathematics and Statistics, Carleton University, 4302 Herzberg Laboratories, 1125 Colonel By Drive, Ottawa, ON, K1S 5B6, Canada; 
D\'{e}partement de math\'{e}matiques, Universit\'{e} du Qu\'{e}bec \`a Montr\'{e}al, Local PK-5151, 201 Avenue du Pr\'{e}sident-Kennedy, Montr\'{e}al, QC, H2X 3Y7, Canada;
Department of Pure Mathematics, University of Waterloo, 200 University Avenue West, Waterloo, ON, N2L 3G1, Canada
}
\email{nathan.m.grieve@gmail.com}%
\begin{document}

\begin{abstract}
We address the question of effectivity for calculation of local Weil functions from the viewpoint of presentations of Cartier divisors.  This builds on the approach of Bombieri and Gubler as well as the perspective of our earlier works.  Among other features, our approach here gives rise to theoretical effective algorithms for calculating local Weil functions on projective varieties.
\end{abstract}

\thanks{
\emph{Mathematics Subject Classification (2020):}  11G50, 14G05, 13D02. \\
\emph{Key Words:} Local Weil functions, Castelnouvo-Mumford regularity, Syzygies.  \\
The author thanks the Natural Sciences and Engineering Research Council of Canada for their support through his grants DGECR-2021-00218 and RGPIN-2021-03821. \\
Date: \today.  \\
File name: \filename}

\maketitle

\section{Introduction}\label{intro}

Working over a base number field $\KK$, consider the case of a geometrically integral projective variety $X$ embedded in projective $n$-space $\PP^n_{\KK}$.  Recall, that the theory of local Weil functions on $X$ play a key role in understanding the distribution and complexity of rational points in $X$.  Indeed, as emphasized by Silverman, in \cite{Silverman:1987}, such functions give a measure of  the negative logarithmic $v$-adic distance, for $v \in M_{\KK}$ a place of $\KK$, of rational points to divisors on and above $X$.  

Our purpose here is to address the foundational question of effective calculation of local Weil functions on $X$.  In this regard, our main result is Theorem \ref{explicit:local:weil:constant}.   Among other features, this result gives a refined and robust extension to \cite[Section 2.2]{Bombieri:Gubler}.   

As one example, our approach here gives rise to theoretical effective algorithms for calculating local Weil functions on $X$.   Indeed, as indicated in the proof of Lemma \ref{difference:ample:line:bundle:lemma}, effective calculation of \emph{presentations of Cartier divisors} on $X$, in the sense that we have defined in Definition \ref{presentation:Cartier:divisor:defn}, can be achieved via the theory of graded modules over a polynomial ring.  

The more subtle point then is to realize Cartier divisors on $X$ as locally free rank $1$ modules over the homogeneous coordinate ring.   Aside from this, together with sufficient knowledge of defining equations for $X$ and solutions thereof, the calculation of local Weil functions becomes entirely effective.  

Further, by this perspective, the role of Theorem \ref{explicit:local:weil:constant} is then to indicate the manner in which the constants that arise upon different choices of such presentations become effective.  The results and viewpoint that we offer here  can be seen as a starting point to the difficult and foundational question of understanding the role in which defining equations and syzygies of projective varieties govern Diophantine arithmetic questions.  

This article is organized in the following way.  In Section \ref{abs:values:conventions}, we fix conventions and other preliminaries.  In Section \ref{Presentations:Cartier:divisors} we discuss the concept of \emph{presentation of Cartier divisors} with placing emphasis on effective aspects. (See Lemma \ref{difference:ample:line:bundle:lemma} and its proof.)  We include the proof in order to indicate effective aspects thereof.

In Section \ref{effective:nullstellensatz}, we discuss effective approaches to Hilbert's Nullstellensatz (see for instance \cite{Masser:Wustholz:1983}, \cite{Kollar:1999} or \cite{Jelonek:2005}).  This becomes relevant to the theory that we describe in Sections \ref{FF:bounded:subsets} and \ref{main:thm:section}.   

In Section \ref{FF:bounded:subsets}, we develop a concept of \emph{boundedness} for subsets of rational points and with respect to a given field of definition.  This plays an important role in Section \ref{main:thm:section} which is where we prove our main result (Theorem \ref{explicit:local:weil:constant}).

\subsection*{Acknowledgements}  The author thanks the Natural Sciences and Engineering Research Council of Canada for their support through his grants DGECR-2021-00218 and RGPIN-2021-03821.  The author thanks anonymous referees for their careful reading and helpful comments that pertain to this work.

\section{Selected conventions and other preliminaries}\label{abs:values:conventions}

Let $\KK$ be a number field with set of places by $M_{\KK}$.  We fix an algebraic closure $\overline{\KK}$ of $\KK$.  All finite extension fields $\FF / \KK$ are assumed to be contained in $\overline{\KK}$.

Our conventions for representatives $|\cdot|_v$ for $v \in M_{\KK}$ follow those of \cite[Section 2.1]{Grieve:qualitative:subspace}.  For example, if $v \in M_{\KK}$ and $v | p \in M_{\QQ}$, then the restriction of $|\cdot|_v$ to $\QQ$ is $|\cdot|_p^{[\KK_v:\QQ_p]/[\KK:\QQ]}$.  So, in particular, the product formula holds true with multiplicities equal to one. 

Further, if $\FF / \KK$  is a finite extension and $v \in M_{\KK}$ then choose $w \in M_{\FF}$ with $w | v$ and put 
$$|\cdot|_{v,\KK} = |\cdot|_{v,\FF/\KK} := |\mathrm{N}_{\FF_w/\KK_v}(\cdot)|_v^{\frac{1}{[\FF_w:\KK_v]}} \text{.}$$  
Then $|\cdot|_{v,\KK}$ is an extension of $|\cdot|_v$ to $\FF$.

In what follows, unless stated otherwise, all projective varieties are assumed to be defined over $\KK$ are also assumed to be geometrically integral.  On the other hand, we oftentimes consider Cartier divisors which are defined over some finite extension of $\KK$.

\section{Presentations of Cartier divisors}\label{Presentations:Cartier:divisors}

The concept of \emph{presentation of Cartier divisors}, from \cite{Bombieri:Gubler} and developed further in \cite{Grieve:Divisorial:Instab:Vojta} and \cite{Grieve:qualitative:subspace}, allows for a robust theory of local Weil and height functions.  Here, we build on these perspective and place emphasis on effective aspects thereof.

To begin with, we place emphasis on effective aspects in the proof of the following foundational lemma.

\begin{lemma}[Compare with {\cite[Exercise II.7.5]{Hart}}]\label{difference:ample:line:bundle:lemma}  Let $X$ be a geometrically integral projective variety defined over $\KK$.
Let $D$ be a Cartier divisor on $X$ and defined over a finite extension $\FF / \KK$ of $\KK$.  Then $X$ admits globally generated line bundles $L$ and $M$ which defined over $\FF$ and which are such that 
$$
\Osh_{X_{\FF}} (D) \simeq L \otimes M^{-1} \text{.}
$$
\end{lemma}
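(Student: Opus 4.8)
The plan is to reduce immediately to the projective situation. Since $D$ is a Cartier divisor on $X_{\FF}$ and $X_{\FF}$ is projective over $\FF$, fix a very ample line bundle $\Osh_{X_{\FF}}(1)$ coming from the given embedding $X \hookrightarrow \PP^n_{\KK}$ (base-changed to $\FF$). The classical argument, exactly as in \cite[Exercise II.7.5]{Hart}, is that for $m \gg 0$ the twist $\Osh_{X_{\FF}}(D) \otimes \Osh_{X_{\FF}}(m)$ is globally generated; one then sets $L := \Osh_{X_{\FF}}(D) \otimes \Osh_{X_{\FF}}(m)$ and $M := \Osh_{X_{\FF}}(m)$, so that $M$ is very ample hence globally generated, and $\Osh_{X_{\FF}}(D) \simeq L \otimes M^{-1}$.

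First I would make the global generation step precise and effective. Writing $D = D_0 - D_\infty$ as a difference of effective Cartier divisors (clearing denominators in the local equations, which can be done over $\FF$), it suffices to treat each of $D_0$, $D_\infty$ separately, i.e.\ to handle an effective Cartier divisor $E$ and show $\Osh_{X_{\FF}}(E) \otimes \Osh_{X_{\FF}}(m)$ is globally generated for explicit $m$. Now $\Osh_{X_{\FF}}(-E)$ is the ideal sheaf of a closed subscheme, so it is a coherent sheaf on $\PP^n_{\FF}$; by Serre vanishing, or more precisely by Castelnuovo--Mumford regularity, there is an effectively computable bound — in terms of the degrees of generators of the saturated ideal defining $E$ inside the homogeneous coordinate ring of $X$ — past which the twist is globally generated. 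This is precisely where the graded-module machinery enters: one computes a presentation of the module $\bigoplus_m \H^0(X_{\FF}, \Osh_{X_{\FF}}(E)(m))$ over the polynomial ring, reads off its regularity, and thereby obtains the admissible $m$.

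The main obstacle, as the introduction already flags, is not the existence statement (which is soft) but the effectivity: realizing $\Osh_{X_{\FF}}(D)$ concretely as a rank-one locally free module over the homogeneous coordinate ring $S_{\FF}$ of $X_{\FF}$, so that the Castelnuovo--Mumford regularity can actually be extracted from a free resolution. Once $D$ is presented in this form — a choice of local equations on an affine cover, equivalently a graded $S_{\FF}$-module together with the isomorphisms identifying it with $\bigoplus_m \H^0(\Osh_{X_{\FF}}(D)(m))$ for $m$ large — the bound on $m$ making both $L$ and $M$ globally generated is a finite computation with Gröbner bases over $\FF$. I would therefore carry out the proof in two explicitly separated parts: the purely geometric part (\cite{Hart}-style twisting argument producing $L$ and $M$), and an explicit remark that the threshold $m$ is bounded by the regularity of the associated graded module, hence effectively computable; this second part is what justifies the "effective aspects" the lemma is invoked for in the sequel.
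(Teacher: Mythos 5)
Your first paragraph is exactly the paper's argument: twist by a high power of a fixed very ample line bundle so that $\Osh_{X_{\FF}}(D)\otimes L^{\otimes m_0}$ becomes globally generated, then read $\Osh_{X_{\FF}}(D)$ as a difference of that bundle and $L^{\otimes m_0}$ (the paper's labels for $L$ and $M$ are swapped relative to yours, but that is cosmetic). The existence statement is thus correctly established and by the same route.

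Where you diverge from the paper is in how you propose to make the threshold $m_0$ effective. The paper simply sets $m_0 := \operatorname{reg}_L(\Osh_{X_{\FF}}(D))$, the Castelnuovo--Mumford regularity of the sheaf $\Osh_{X_{\FF}}(D)$ itself with respect to $L$, computed from the graded module $\bigoplus_m \H^0(X_{\FF},\Osh_{X_{\FF}}(D)\otimes L^{\otimes m})$; no decomposition of $D$ into effective pieces is made. Your proposal to write $D = D_0 - D_\infty$ and pass to ideal sheaves is a reasonable alternative way to get one's hands on explicit graded modules, but as written it has a slip: for an effective divisor $E$ you want a bound past which $\Osh_{X_{\FF}}(E)(m)$ is globally generated, yet you pivot to the regularity of the ideal sheaf $\Osh_{X_{\FF}}(-E)$, which controls global generation of twists of $\Osh_{X_{\FF}}(-E)$, not of its dual $\Osh_{X_{\FF}}(E)$. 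Also note that with the decomposition the natural choices are $L = \Osh_{X_{\FF}}(D_0)(m)$ and $M = \Osh_{X_{\FF}}(D_\infty)(m)$, not $L = \Osh_{X_{\FF}}(D)(m)$ and $M = \Osh_{X_{\FF}}(m)$ as in your first paragraph; the two setups need to be reconciled. None of this affects the correctness of the lemma, since the existence claim is already secured by your first paragraph, but if you want the effectivity remark to stand it should track the regularity of $\Osh_{X_{\FF}}(D)$ (or of $\Osh_{X_{\FF}}(D_0)$ and $\Osh_{X_{\FF}}(D_\infty)$) rather than of the ideal sheaves.
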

\begin{proof}
Fix a very ample line bundle $L$ on $X$.   By a small abuse of notation denote also by $L$ its pullback to $X_{\FF}$.   Fix a sufficiently large positive integer $m_0 \gg 0$ so that the line bundle 
$$M \simeq L^{\otimes m_0} \otimes \Osh_{X_{\FF}}(D)$$ 
is globally generated.  For example, take 
$$m_0 := \operatorname{reg}_{L}(\Osh_{X_{\FF}}(D))$$ 
for $\operatorname{reg}_{L}(\Osh_{X_{\FF}}(D))$ the Castelnouvo-Mumford regularity of $\Osh_{X_{\FF}}(D)$ with respect to $L$.

Then the line bundle
$$
L^{\otimes m_0 + 1 } \otimes \Osh_{X_{\FF}}(D) \simeq L \otimes \left( L^{\otimes m_0} \otimes \Osh_{X_{\FF}} (D) \right)
$$
is very ample and hence globally generated. 

Finally, since
$$
L^{m_0 + 1} \otimes \Osh_{X_{\FF}}(D) = L \otimes M
$$
it follows that 
$$
\Osh_{X_{\FF}}(D) = L^{\otimes - m_0} \otimes M
$$
gives a description of $\Osh_{X_{\FF}}(D)$ as the difference of the globally generated line bundles $M$ and $L^{\otimes m_0}$.
\end{proof}

We recall the concept of \emph{presentation of Cartier divisors} with coefficients in a finite extension of the base number field as defined in \cite[Section 2.2]{Grieve:qualitative:subspace}.

\begin{defn}\label{presentation:Cartier:divisor:defn} Let $X$ be a geometrically integral projective variety defined over $\KK$.
Let $D$ be a Cartier divisor on $X$ and defined over a finite extension $\FF / \KK$ of the base number field $\KK$.  Denote by $s_D$ its associated meromorphic section of the line bundle $\Osh_{X_{\FF}}(D)$.
Choose globally generated line bundles $L$ and $M$ on $X$ which are such that 
$$\Osh_{X_{\FF}}(D) \simeq L \otimes M^{-1}\text{.}$$  
Fix global generating sections $s_0,\dots,s_k$ of $L$ and $t_0,\dots,t_\ell$ of $M$.  The data 
$$
\mathcal{D} = (s_D; L, \mathbf{s}; M, \mathbf{t})
$$
where 
$$\mathbf{s} = (s_0,\dots,s_k) \text{ and } \mathbf{t} = (t_0,\dots,t_\ell)$$ 
is called a \emph{presentation} of $D$ (and defined over $\FF$).
\end{defn}

\begin{remark}
Note that a first step to effectively construct presentations of $D$ is to obtain a collection of globally generating sections for $L$ and $M$.
\end{remark}


\begin{remark}
The existence of presentations, in the sense that is defined in Definition \ref{presentation:Cartier:divisor:defn}, is assured by Lemma \ref{difference:ample:line:bundle:lemma}.
\end{remark}

As in \cite{Grieve:qualitative:subspace}, and building on the theory from \cite[Section 2.2]{Bombieri:Gubler}, we consider local Weil functions with respect to presentations of Cartier divisors.

\begin{defn}\label{defn:local:Weil:function}
In the situation of Definition \ref{presentation:Cartier:divisor:defn},  fix a place $v \in M_{\KK}$ and denote by $|\cdot|_v = |\cdot|_{v,\KK}$ is associated normalized absolute value.   For $x \in X \setminus \operatorname{supp}(D)$, define
$$
\lambda_{\mathcal{D}}(x;|\cdot|_v) := \max_i \min_j \log \left| \frac{s_i}{t_j s_D}(x) \right|_v \text{.}
$$
The real valued function $\lambda_{\mathcal{D}}(x;|\cdot|_v)$ is the \emph{local Weil function} of $D$ with respect to the presentation $\mathcal{D}$ and the absolute value $|\cdot|_v$.  By construction, it has domain $X \setminus \operatorname{supp}(D)$.
\end{defn}

\begin{example} Let $X$ be a geometrically integral projective variety defined over $\KK$.
Let $s \in \FF(X)^\times$ be a nonzero rational function on $X$, defined over $\FF$, and with Cartier divisor $D := \operatorname{div}(s)$.  Then 
$$
\Osh_{X_{\FF}}(D) \simeq \Osh_{X_{\FF}}
$$
and $s$ is a meromorphic section of $\Osh_{X_{\FF}}(D)$.  Fix $v \in M_{\KK}$.  A local Weil function
$$\lambda_s = \lambda_s(\cdot; |\cdot|_v)$$ relative to $D$ is given by the presentation
$$
s = (s;\Osh_{X_{\FF}},1;\Osh_{X_{\FF}},1)
$$
that is defined by the condition that
$$
\lambda_s(x) := - \log |s(x)|_v
$$
for $x \not \in \operatorname{supp}(D) = \emptyset$.
If $s'$ is another nonzero rational function on $X$, then 
$$
\lambda_{ss'} = \lambda_{s} + \lambda_{s'}
$$
and so espeically
$$
\lambda_{s^{-1}} = - \lambda_s \text{.}
$$
\end{example}

\begin{example} Let $X$ be a geometrically integral projective variety defined over $\KK$.
Let $D_1$ and $D_2$ be two Cartier divisors on $X$, defined over a common finite extension field $\FF / \KK$ with presentations 
$$
\mathcal{D}_i = (s_{D_i};L_i,\mathbf{s}_i; M_i, \mathbf{t}_i)
$$
and with corresponding local height functions 
$$\lambda_{\mathcal{D}_i} = \lambda_{\mathcal{D}_i}(\cdot; |\cdot|_v)
\text{ for $v \in M_{\KK}$ and $i = 1,2$.}
$$

Then 
$$\mathbf{s}_1\cdot \mathbf{s}_2 := ( s_{1k} \cdot s_{2 k'})$$ 
and 
$$\mathbf{t}_1 \cdot \mathbf{t}_2 := (t_{1\ell} t_{2 \ell'})$$ 
are respective generating global sections of $L_1 \otimes L_2$ and $M_1 \otimes M_2$.  Define 
$$\lambda_{\mathcal{D}_1 + \mathcal{D}_2} (\cdot) := \lambda_{\mathcal{D}_1 + \mathcal{D}_2}(\cdot; |\cdot|_v)
$$ 
as the local height that is relative to the presentation
$$
\mathcal{D}_1 + \mathcal{D}_2 = (s_{D_1} s_{D_2}; L_1 \otimes L_2, \mathbf{s}_1 \mathbf{s}_2; M_1 \otimes M_2, \mathbf{t}_1 \mathbf{t}_2)
$$
of the divisor $D_1 + D_2$.  Then
$$
\lambda_{\mathcal{D}_1 + \mathcal{D}_2}(x) = \lambda_{\mathcal{D}_1}(x) + \lambda_{\mathcal{D}_2}(x) \text{ for $x \in X \setminus \operatorname{supp}(D_1) \bigcup \operatorname{supp}(D_2)$.}
$$
\end{example}

\begin{example}
Suppose that $D$ is a degree $d$ hypersurface in projective $n$-space $\PP^n_{\KK}$ defined by a degree $d$ homogeneous form
$$
s_D = s_D (x_0,\dots,x_n) \in \FF[x_0,\dots,x_n] 
$$
and consider the presentation
$$
\mathcal{D} = (s_D; \Osh_{\PP^n_{\FF}}(d), (x_0^d,\dots, x_0^{i_0} \cdot \hdots \cdot x_n^{i_n}, \dots, x_n^d); \Osh_{\PP^n_{\FF}}, (1)) \text{.}
$$
Then the corresponding local Weil function for $v \in M_{\KK}$ has the form
$$
\lambda_{\mathcal{D}}(x; v) := \max_{
\substack{
\mathbf{i} = (i_0,\dots,i_n) \\
|\mathbf{i}| = d
}
}
 \log \left| \frac{x_0^{i_0} \cdot \hdots \cdot x_n^{i_n}}{s_D(x)} \right|_v
\text{ 
for $x \in \PP^n$ with $s_D(x) \not = 0$.
}
$$
\end{example}

\section{Selected recall about effective Hilbert's Nullstellensatz}\label{effective:nullstellensatz}

We recall, for later use, the following effective formulation of Hilbert's Nullstellensatz (compare for example with  \cite{Masser:Wustholz:1983}, \cite{Kollar:1999} or \cite{Jelonek:2005}).  

\begin{theorem}[\cite{Jelonek:2005}]\label{effective:Hilbert:Nullstellensatz}
Let $\KK$ be a number field with algebraic closure $\overline{\KK}$.  Let $\FF / \KK$ be a finite extension in $\overline{\KK}$.  Let $U \subset \AA^n_{\KK}$ be a geometrically irreducible affine variety with defining ideal $I_U$ and affine coordinate ring
$$
\Gamma(U,\Osh_U) = \KK[U] = \KK[x_1,\dots,x_n]/I_U \text{.}
$$
  
Suppose that 
$$
f_1,\dots,f_\ell \in \FF[U_{\FF}]
$$
are non-constant and without common zeros on 
$$U_{\FF} := U \times_{\spec \KK} \spec \FF \text{.}$$ 

Then there exists effective constants 
$$N_0 = N_0(\deg(f_1),\dots,\deg(f_\ell);\dim U)$$ 
and 
$$M_0 = M_0(\deg(f_1),\dots,\deg(f_\ell);\dim U)$$ 
together with 
$$g_1,\dots,g_{\ell} \in \FF[U_{\FF}]$$ 
which are such that
$$
1 = \sum_{i=1}^\ell f_i g_i \text{,}
$$
$$
\deg(f_i g_i) \leq N_0
$$
and such that the \emph{logarithmic sizes} of each of the $g_i$ are at most equal to $M_0$.  Here, $\deg(\cdot)$ denotes the affine degree of residue classes of polynomials in $\FF[x_1,\dots,x_n]$.
\end{theorem}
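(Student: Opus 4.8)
The plan is to deduce the statement from the effective forms of Hilbert's Nullstellensatz over affine space --- the geometric refinement of \cite{Jelonek:2005} together with classical degree bounds as in \cite{Kollar:1999} and their arithmetic counterparts --- by passing between the coordinate ring $\FF[U_{\FF}]$ and the polynomial ring $\FF[x_1,\dots,x_n]$. The point to extract is that both the degree bound $N_0$ and the bound $M_0$ on logarithmic sizes can be made to depend only on the $\deg(f_i)$ and on the geometry of $U$ (through $\dim U$, and implicitly through invariants such as $\deg U$ and an arithmetic complexity of $U$), rather than on the ambient dimension $n$ or on a choice of generators of $I_U$.

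First I would fix lifts $\widetilde{f_1},\dots,\widetilde{f_\ell} \in \FF[x_1,\dots,x_n]$ of $f_1,\dots,f_\ell$ with $\deg \widetilde{f_i} = \deg f_i$ and with image $f_i$ in $\FF[U_{\FF}]$. Fixing generators $h_1,\dots,h_m$ of the prime ideal $I_{U_{\FF}} = I_U \otimes_{\KK} \FF$, the hypothesis that the $f_i$ have no common zero on $U_{\FF}$ says exactly that $\widetilde{f_1},\dots,\widetilde{f_\ell},h_1,\dots,h_m$ have no common zero on $\AA^n_{\FF}$. The classical effective Nullstellensatz then supplies a B\'ezout identity $1 = \sum_i \widetilde{f_i}\,\widetilde{g_i} + \sum_j h_j\,p_j$ in $\FF[x_1,\dots,x_n]$; reducing modulo $I_{U_{\FF}}$ gives $1 = \sum_i f_i\,g_i$ in $\FF[U_{\FF}]$, with $g_i$ the class of $\widetilde{g_i}$, and the degree and logarithmic size of each $g_i$ are controlled by those of $\widetilde{g_i}$ up to the fixed complexity of a standard-basis reduction modulo $I_{U_{\FF}}$.

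The subtlety, and the reason one wants the geometric version of \cite{Jelonek:2005} rather than the plain affine one, is that the latter produces bounds in terms of $n$ and of $\max(\deg f_i, \deg h_j)$, whereas the asserted $N_0$ and $M_0$ must be independent of $n$ and of the presentation of $I_U$. Here I would instead argue intrinsically inside $\FF[U_{\FF}]$: applying the geometric effective Nullstellensatz directly to the affine variety $U_{\FF}$ and the regular functions $f_1,\dots,f_\ell$, one obtains $c = \sum_i f_i\,g_i$ for some $c \in \FF^{\times}$ with $\deg(f_i g_i) \leq N_0$, where $N_0$ depends only on $\dim U$, on $\deg U$ and on $\max_i \deg f_i$; dividing by $c$ yields the required identity with $1$ on the left. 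For the logarithmic sizes one runs the same construction through an arithmetic effective Nullstellensatz (in the style of Krick--Pardo--Sombra and D'Andrea--Krick--Sombra), in which the heights of the cofactors are bounded in terms of the degrees, the dimension, and the heights of the $f_i$ and of $U$; tracking the height of $c^{-1}$ then gives $M_0$.

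The main obstacle is the arithmetic half, namely the uniform control of the logarithmic sizes of the $g_i$ needed to produce $M_0$. The degree statement is essentially formal once the geometric bound is in hand, but the height estimate requires either invoking an arithmetic Nullstellensatz over a variety --- combining the geometric elimination with heights of projective varieties in the sense of Philippon and of Bost--Gillet--Soul\'e --- or else bounding directly the arithmetic complexity of the successive elimination steps and of the reduction modulo $I_{U_{\FF}}$. Managing the growth of heights under these eliminations, while keeping the dependence confined to the stated data, is where the real work lies.
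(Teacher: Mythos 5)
The paper does not actually prove this statement --- its ``proof'' is the single sentence ``See \cite{Jelonek:2005} and \cite{Masser:Wustholz:1983}'' --- and your sketch follows the same route, correctly isolating Jelonek's variety-level effective Nullstellensatz as the source of the degree bound $N_0$ (so that the bound depends on $\dim U$, $\deg U$ and $\max_i \deg f_i$ rather than on the ambient $n$ or a choice of generators of $I_U$) and an arithmetic effective Nullstellensatz as the source of the size bound $M_0$, together with the cosmetic step of dividing out the nonzero constant $c \in \FF^\times$ that Jelonek's statement produces. The one remark worth recording is one you already make implicitly: for the conclusion to hold as stated, $N_0$ must also depend on $\deg U$ and $M_0$ on the heights (logarithmic sizes) of $U$ and of the $f_i$, dependencies that the theorem's notation $N_0(\deg(f_1),\dots,\deg(f_\ell);\dim U)$ and $M_0(\deg(f_1),\dots,\deg(f_\ell);\dim U)$ suppresses.
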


\begin{proof}
See \cite{Jelonek:2005} and \cite{Masser:Wustholz:1983}.
\end{proof}

\section{Preliminaries about bounded subsets with respect to fields of definition}\label{FF:bounded:subsets}

In this section, we discuss the concept of \emph{bounded subsets} of rational points with respect to fields of definition.  This plays an important role in the proof of our main result (Theorem \ref{explicit:local:weil:constant}).

\begin{defn}[Compare with {\cite[Definition 2.2.8]{Bombieri:Gubler}}]\label{FF:bounded:subset}
Let $U$ be an geometrically integral affine variety over $\KK$.  Let $\FF / \KK$ be a finite extension field.  We say that a subset 
$$
E \subset U(\overline{\KK}) 
$$
is \emph{$\FF$-bounded}, if for all
$$
f \in \Gamma(U_{\FF},\Osh_{U_{\FF}}) 
$$
the function
$$
|f|_v \colon U_{\FF} \rightarrow \RR
$$
is bounded on $E$.
\end{defn}

Lemmas \ref{effective:Hilb:Null:lemma:1} and \ref{gauss:eqn2} below give tools for establishing $\FF$-boundedness of such subsets of rational points.
 
\begin{lemma}\label{effective:Hilb:Null:lemma:1}    Let $U$ be an affine variety over $\KK$.  Let $\FF / \KK$ be a finite extension field.  Let $f_1,\dots,f_N$ be $\FF$-algebra generators of $\Gamma(U_{\FF},\Osh_{U_{\FF}})$.   Let $E \subset U(\overline{\KK}) $.   Assume that
$$
\sup_{x \in E} \max_{j=1,\dots,N} |f_j(x)|_v < \infty \text{.}
$$
Then, $E$ is $\FF$-bounded.  In more precise terms, put
$$
\delta =
\begin{cases}
1 & \text{ if $v$ is archimedean} \\
0 & \text{ otherwise} 
\end{cases}
$$
and write
$$
f = p(f_1,\dots,f_N) 
$$
for
$$
p = p(X_1,\dots,X_{N}) = \sum_{\mathbf{j}} a_{\mathbf{j}} X^{\mathbf{j}} \in \FF[X_1,\dots,X_N] \text{.}
$$
Then, if 
$$
d := \operatorname{deg} p(X_1,\dots,X_N)
$$
is the degree of $p$, if 
$$
\operatorname{supp}(p) := \{X_1^{j_1}\cdot \hdots \cdot X_N^{j_N} : a_{j_1,\dots,j_N} \not = 0 \}
$$
is the support of $p$, and if 
$$|p|_v := \max_{\mathbf{j} = (j_1,\dots,j_N)} |a_{\mathbf{j}}|_v $$ 
is the Gauss norm of $p$, it holds true that
$$
\sup_{x \in E} |f(x)|_v \leq \left( \# \operatorname{supp} (p)\right)^{\delta} \left| p \right|_v \max \left( 1, \sup_{x \in E} \max_{j=1,\dots,N} |f_j(x)|_v \right)^{d} < \infty \text{.}
$$
\end{lemma}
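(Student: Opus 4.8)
The plan is entirely elementary. Since $f_1,\dots,f_N$ generate $\Gamma(U_{\FF},\Osh_{U_{\FF}})$ as an $\FF$-algebra, the section $f$ is a polynomial expression $f=p(f_1,\dots,f_N)$ in them; so it suffices to bound such an expression on $E$ monomial by monomial and then reassemble using the appropriate form of the triangle inequality. The hypothesis $\sup_{x\in E}\max_j|f_j(x)|_v<\infty$ is exactly what controls the individual factors, and the factor $(\#\operatorname{supp}(p))^{\delta}$ will be produced by the archimedean triangle inequality.

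First I would fix $x\in E$ and set $M(x):=\max\bigl(1,\max_{1\leq j\leq N}|f_j(x)|_v\bigr)\geq 1$. For each multi-index $\mathbf{j}=(j_1,\dots,j_N)$ occurring in $p$ one has $|\mathbf{j}|:=j_1+\cdots+j_N\leq d$, and therefore
$$
\Bigl|a_{\mathbf{j}}\,f_1(x)^{j_1}\cdots f_N(x)^{j_N}\Bigr|_v
= |a_{\mathbf{j}}|_v\prod_{i=1}^{N}|f_i(x)|_v^{j_i}
\leq |p|_v\,M(x)^{|\mathbf{j}|}
\leq |p|_v\,M(x)^{d},
$$
where the last two inequalities use $M(x)\geq 1$ together with $|a_{\mathbf{j}}|_v\leq |p|_v$. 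Summing over $\mathbf{j}$: if $v$ is non-archimedean the ultrametric inequality gives $|f(x)|_v\leq\max_{\mathbf{j}}|a_{\mathbf{j}}f_1(x)^{j_1}\cdots f_N(x)^{j_N}|_v\leq |p|_v M(x)^{d}$, while if $v$ is archimedean the triangle inequality gives $|f(x)|_v\leq\sum_{\mathbf{j}}|a_{\mathbf{j}}f_1(x)^{j_1}\cdots f_N(x)^{j_N}|_v\leq(\#\operatorname{supp}(p))\,|p|_v M(x)^{d}$, the sum running over the $\#\operatorname{supp}(p)$ multi-indices with $a_{\mathbf{j}}\neq 0$. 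In either case $|f(x)|_v\leq(\#\operatorname{supp}(p))^{\delta}|p|_v M(x)^{d}$.

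Finally I would take the supremum over $x\in E$: by hypothesis $\sup_{x\in E}M(x)=\max\bigl(1,\sup_{x\in E}\max_j|f_j(x)|_v\bigr)<\infty$, which yields the displayed estimate of the statement and in particular the boundedness of $|f|_v$ on $E$; since $f$ was arbitrary, $E$ is $\FF$-bounded. There is no real obstacle here: the only point needing a word of justification is the archimedean triangle inequality, which with the normalization conventions of Section \ref{abs:values:conventions} holds in its usual form --- the normalized archimedean $|\cdot|_v$ being a power of the standard absolute value with exponent at most $1$ --- and which is precisely what accounts for the factor $(\#\operatorname{supp}(p))^{\delta}$; the remainder is routine bookkeeping with absolute values.
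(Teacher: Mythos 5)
Your proof is correct and follows essentially the same route as the paper: write $f=p(f_1,\dots,f_N)$, bound $|f(x)|_v$ by the Gauss norm of $p$ times a power of $\max(1,\max_j|f_j(x)|_v)$ with the archimedean factor $(\#\operatorname{supp}(p))^{\delta}$, and take the supremum over $E$. The only difference is one of exposition: the paper asserts the key inequality \eqref{gauss:eqn1} without derivation, whereas you supply the monomial-by-monomial argument and the ultrametric/triangle-inequality case split that justifies it, including the observation that the normalization of $|\cdot|_v$ as a power $\leq 1$ of a standard absolute value keeps it subadditive.
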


\begin{proof}
Let
$$
f \in \Gamma(U_{\FF},\Osh_{U_{\FF}}) \text{.}
$$
We aim to show that the function
$$
|f|_v \colon U_{\FF} \rightarrow \RR
$$
is bounded on $E$.

To this end, writing
$$
f = p(f_1,\dots,f_N) \text{,}
$$
with
$$
p = p(X_1,\dots,X_N) \in \FF[X_1,\dots,X_N] \text{,}
$$
if
$$
d = \operatorname{deg} p
$$
and
$$
\delta =
\begin{cases}
1 & \text{ if $v$ is archimedean} \\
0 & \text{ otherwise} \text{,}
\end{cases}
$$
then
\begin{equation}\label{gauss:eqn1}
|f(x)|_v \leq (\# \operatorname{supp}(p))^\delta |p|_v \max(1,\operatorname{max}_{j = 1,\dots,N} |f_j(x)|_v)^d \text{ for each $x \in U_{\FF}$.}
\end{equation}

On the other hand, by assumption
\begin{equation}\label{gauss:eqn2}
\sup_{x \in E} \max_{j=1,\dots,N} |f_j(x)|_v < \infty \text{.}
\end{equation}
Thus, upon combining \eqref{gauss:eqn1} and \eqref{gauss:eqn2}, the conclusion is that
$$
\sup_{x \in E}|f(x)|_v \leq (\# \operatorname{supp}(p))^\delta |p|_v \max\left(1,\sup_{x \in E} \max_{j=1,\dots,N}|f_j(x)|_v\right)^d < \infty \text{.}
$$
\end{proof}

\begin{lemma}\label{effective:Hilb:Null:lemma:2}   Consider a standard finite affine covering $\{U_\ell\}_{\ell \in I}$, defined over a finite extension $\FF / \KK$, of an affine $\KK$-variety $U$, where 
$$
U_{\ell} = \{x \in U : h_\ell \not = 0 \} \subset U_{\FF}
$$ 
are standard affine open subsets, for regular functions 
$$h_\ell \in \Gamma(U_{\FF},\Osh_{U_{\FF}}) \text{.}$$  
Suppose that 
$$E \subset U(\overline{\KK})$$ 
is an $\FF$-bounded subset.  

Then there are $\FF$-bounded subsets 
$$E_\ell \subset U_\ell(\overline{\KK})\text{,}$$ 
which are such that 
$$E = \bigcup_{\ell \in I} E_\ell\text{.}$$  

In fact, for each $\ell \in I$ set
$$
E_\ell := \left\{x \in E : |h_\ell(x)|_v = \max_{k \in I} |h_k(x)|_v \right\} \text{.}
$$
Further,
 fix a collection of $\FF$-algebra generators, $f_1,\dots,f_N$, for $\Gamma(U_{\FF},\Osh_{U_{\FF}})$ and set 
$$f_{N+1} := 1 / h_\ell \text{.}$$  
Then
$$
\sup_{x \in E_{\ell}} \left| f_{N+1} (x) \right|_v \leq (\# I)^\delta \sup_{x \in E} \max_{k \in I} |g_k(x)|_v < \infty \text{.}
$$
and if 
$$
f \in \Gamma(U_{\ell},\Osh_{U_{\ell}})
$$
is written in the form
$$
f = q(f_1,\dots,f_{N+1}) \text{,}
$$
for 
$$
q = q(X_1,\dots,X_{N+1}) = \sum_{\mathbf{j}} b_{\mathbf{j}} X^{\mathbf{j}} \in \FF[X_1,\dots,X_{N+1}] \text{,}
$$
it follows that 
$$
\sup_{x \in E_{\ell}} | f(x)|_v \leq (\# \operatorname{supp}(q))^{\delta} |q|_v \max \left(1,\sup_{x \in E_{\ell}} \max_{j=1,\dots,N+1} |f_j(x)|_v \right)^{\deg q} \text{.}
$$
Here $|q|_v := \max_{\mathbf{j} = (j_1,\dots,j_N)} |b_{\mathbf{j}}|_v $ is the Gauss norm of $q$ with respect to $v$.
\end{lemma}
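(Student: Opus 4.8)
The plan is to take $E_\ell$ exactly as prescribed in the statement, to control $1/h_\ell$ on $E_\ell$ by combining an effective Nullstellensatz for the covering functions with the maximality condition defining $E_\ell$, and then to read off $\FF$-boundedness of each $E_\ell$ from a single application of Lemma \ref{effective:Hilb:Null:lemma:1} on $U_\ell$. First I would invoke the covering hypothesis: since $\bigcup_{\ell\in I}U_\ell = U_\FF$, the functions $\{h_\ell\}_{\ell\in I}$ have no common zero on $U_\FF$, and --- setting aside the trivial case in which some $h_\ell$ is a nonzero constant --- Theorem \ref{effective:Hilbert:Nullstellensatz} supplies regular functions $g_k \in \Gamma(U_\FF,\Osh_{U_\FF})$, $k\in I$, with $1 = \sum_{k\in I}h_k g_k$. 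Because $E$ is $\FF$-bounded and $I$ is finite, $\sup_{x\in E}\max_{k\in I}|g_k(x)|_v = \max_{k\in I}\sup_{x\in E}|g_k(x)|_v < \infty$, which is exactly the quantity on the right-hand side of the asserted bound on $f_{N+1}$.

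Next I would establish that bound. Fix $\ell\in I$ and $x\in E_\ell$. Evaluating $1 = \sum_k h_k g_k$ at $x$ and applying the ultrametric inequality in the non-archimedean case, or the triangle inequality at the cost of a factor $\# I$ in the archimedean case, gives
$$
1 = |1|_v \leq (\# I)^\delta \max_{k\in I}\bigl(|h_k(x)|_v\,|g_k(x)|_v\bigr) \leq (\# I)^\delta\,|h_\ell(x)|_v\,\max_{k\in I}|g_k(x)|_v \text{,}
$$
the last step using $|h_k(x)|_v\leq|h_\ell(x)|_v$ for every $k$, by definition of $E_\ell$. In particular $h_\ell(x)\neq 0$, so $x\in U_\ell(\overline{\KK})$, and dividing gives $|f_{N+1}(x)|_v = |1/h_\ell(x)|_v \leq (\# I)^\delta\max_{k\in I}|g_k(x)|_v$; taking the supremum over $x\in E_\ell\subseteq E$ yields the displayed inequality for $\sup_{x\in E_\ell}|f_{N+1}(x)|_v$, which is finite by the previous paragraph. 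The same chain shows every $x\in E$ lies in $E_\ell$ for any index $\ell$ attaining the (positive) value $\max_{k\in I}|h_k(x)|_v$, so $E\subseteq\bigcup_\ell E_\ell$; as $E_\ell\subseteq E$ trivially, this gives $E=\bigcup_{\ell\in I}E_\ell$.

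Finally I would deduce $\FF$-boundedness of $E_\ell$. Because $\Gamma(U_\ell,\Osh_{U_\ell}) = \Gamma(U_\FF,\Osh_{U_\FF})[1/h_\ell]$, the functions $f_1,\dots,f_N,f_{N+1}$ generate $\Gamma(U_\ell,\Osh_{U_\ell})$ as an $\FF$-algebra; for $j\leq N$ one has $\sup_{x\in E_\ell}|f_j(x)|_v\leq\sup_{x\in E}|f_j(x)|_v<\infty$ by $\FF$-boundedness of $E$, and the bound for $f_{N+1}$ has just been proved, so $\sup_{x\in E_\ell}\max_{1\leq j\leq N+1}|f_j(x)|_v<\infty$. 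Applying Lemma \ref{effective:Hilb:Null:lemma:1} to $U_\ell$ with the generators $f_1,\dots,f_{N+1}$, and writing an arbitrary $f\in\Gamma(U_\ell,\Osh_{U_\ell})$ as $f=q(f_1,\dots,f_{N+1})$, yields
$$
\sup_{x\in E_\ell}|f(x)|_v \leq (\#\supp(q))^\delta\,|q|_v\,\max\!\left(1,\ \sup_{x\in E_\ell}\max_{1\leq j\leq N+1}|f_j(x)|_v\right)^{\deg q} < \infty \text{,}
$$
which is simultaneously the claimed estimate and the statement that $E_\ell$ is $\FF$-bounded. I do not expect a serious obstacle here: the argument is a direct concatenation of Theorem \ref{effective:Hilbert:Nullstellensatz} and Lemma \ref{effective:Hilb:Null:lemma:1}, with finiteness of all bounds guaranteed by $\# I<\infty$ together with the $\FF$-boundedness of $E$. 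The one point that needs care is purely notational: the functions $g_k$ named in the statement must be understood as a fixed choice of Nullstellensatz coefficients for the covering functions $h_\ell$.
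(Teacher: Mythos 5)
Your proposal is correct and follows essentially the same route as the paper: invoke the effective Nullstellensatz to produce a partition of unity $1=\sum_k h_k g_k$, use the maximality condition defining $E_\ell$ together with the $\FF$-boundedness of $E$ to bound $|1/h_\ell|_v$ on $E_\ell$, and then conclude $\FF$-boundedness of $E_\ell$ by feeding the generators $f_1,\dots,f_N,1/h_\ell$ into Lemma \ref{effective:Hilb:Null:lemma:1}. Your explicit remark that the $g_k$ in the stated inequality are a fixed choice of Nullstellensatz coefficients, and the aside about the trivial case of a unit $h_\ell$, are both sensible clarifications that the paper leaves implicit.
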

\begin{proof}

Let $f_1,\dots,f_N$ be a set of $\FF$-algebra generators for $\Gamma(U_{\FF},\Osh_{U_{\FF}})$.  

By assumption, there are finitely many regular functions 
$$h_\ell \in \Gamma(U_{\FF},\Osh_{U_{\FF}})\text{,}$$ 
for each $\ell \in I$, which are such that 
$$
U_{\ell} = \{x \in U : h_\ell \not = 0 \} \text{.}
$$

Since $\{U_\ell \}_{\ell \in I}$ is an affine open cover of $U_{\FF}$, the effective Hilbert's Nullstellensatz (Theorem \ref{effective:Hilbert:Nullstellensatz})  implies existence of regular functions 
$$g_\ell \in \Gamma(U_{\FF},\Osh_{U_{\FF}})$$ 
which are such that 
\begin{equation}\label{Nullstellensatz:Eqn:0}
\sum_{\ell \in I} g_\ell h_\ell = 1 \text{.}
\end{equation}
Note, that each of the $g_\ell$ and $h_\ell$, for each $\ell \in I$, can be expressed as polynomials in the $\FF$-algebra generators $f_1,\dots,f_N$.  

Also, an effective bound for the degrees and the logarithmic sizes of the $g_\ell$ can be given in terms of the $h_\ell$---recall that the $h_\ell$ can be written as polynomials in the $\FF$-algebra generators $f_1,\dots,f_N$.

As before, set
$$
\delta =
\begin{cases}
1 & \text{ if $v$ is archimedean} \\
0 & \text{ otherwise} \text{.}
\end{cases}
$$

Then, a consequence of the relation \eqref{Nullstellensatz:Eqn:0} is that 
\begin{equation}\label{Nullstellensatz:Eqn:1}
\inf_{x \in E} \max_\ell | h_\ell(x)|_v \geq \left(\# I  \right)^{-\delta} \left(\sup_{x \in E} \max_\ell |g_\ell(x)|_v \right)^{-1} > 0 \text{.}
\end{equation}

Now, for each $\ell \in I$, put
$$
E_\ell := \left\{x \in E : |h_\ell(x)|_v = \max_{k \in I} |h_k(x)|_v \right\} \text{.}
$$
Then
$$
E_{\ell} \subset U(\overline{\KK}) 
$$
and
$$
E = \bigcup_{\ell \in I} E_\ell \text{.}
$$

Recall that, by assumption, $E$ is $\FF$-bounded.  In particular,
\begin{equation}\label{Nullstellensatz:Eqn:2}
\sup_{x \in E_{\ell}} \max_{j=1,\dots,N} |f_j(x)|_v \leq \sup_{x \in E} \max_{j=1,\dots,N} |f_j(x)|_v < \infty \text{.}
\end{equation}

Fix $\ell \in I$.  Then, since $f_1,\dots,f_N,1/h_\ell$ are generators for $\Gamma(U_\ell,\Osh_{U_{\ell}})$ and because of the above Lemma \ref{effective:Hilb:Null:lemma:1} together with the inequality \eqref{Nullstellensatz:Eqn:2}, to show that $E_\ell$ is bounded, 
it suffices to show that $|1/h_\ell|_v$ is bounded on $E_\ell$.  

Now, it follows from \eqref{Nullstellensatz:Eqn:1} that
$$
\sup_{x \in E_{\ell}} \left| \frac{1}{h_\ell} \right|_v \leq (\# I)^\delta \sup_{x \in E} \max_{k \in I} |g_k(x)|_v < \infty \text{.}
$$

For the Lemma's final assertion, recall that for each fixed $\ell \in I$, 
$$
f_1,\dots,f_N,1/h_\ell
$$
are $\FF$-algebra generators of $\Gamma(U_{\ell},\Osh_{U_{\ell}})$.

Writing 
$$f_{N+1} = 1 / h_\ell\text{,}$$ 
we have shown that 
$$
\sup_{x \in E_{\ell}} \max_{j=1,\dots,N+1} |f_j(x)|_v < \infty \text{.}
$$

So, as in the previous Lemma \ref{effective:Hilb:Null:lemma:2}, if 
$$
f \in \Gamma(U_{\ell},\Osh_{U_{\ell}})
$$
is written in the form
$$
f = q(f_1,\dots,f_{N+1}) 
$$
for 
$$
q = q(X_1,\dots,X_{N+1}) = \sum_{\mathbf{j}} b_{\mathbf{j}} X^{\mathbf{j}} \in \FF[X_1,\dots,X_{N+1}] 
$$
then
$$
\sup_{x \in E_{\ell}} | f(x)|_v \leq (\# \operatorname{supp}(q))^{\delta} |q|_v \max\left(1,\sup_{x \in E_{\ell}} \max_{j=1,\dots,N+1} |f_j(x)|_v\right)^{\deg q} \text{.}
$$
\end{proof}

\section{Effective calculations of local Weil functions}\label{main:thm:section}

Let $X$ be a geometrically integral projective variety defined over $\KK$.  Let $D$ be a Cartier divisor on $X$ and defined over a finite extension $\FF / \KK$ of $\KK$.

Suppose given two \emph{presentations}
$$
\mathcal{D}_i = (s_{iD}; L_i,\mathbf{s}_i = (s_{ii,0},\dots,s_{i,n_i}); M_i, \mathbf{t}_i = (t_{i0},\dots,t_{i,m_i})) \text{ for $i = 1,2$ }
$$
of $D$ (and defined over $\FF$).  

First of all, note that the meromorphic sections $s_{1D}$ and $s_{2D}$, of the line bundle $\Osh_{X_{\FF}}(D)$, differ by a nonzero scalar multiple.  Especially
$$
s_{1D} s_{2D}^{-1} = \alpha \text{ 
for some $\alpha \in \FF^\times$.
}
$$

Consider the presentation
$$
\mathcal{D} := \mathcal{D}_1 - \mathcal{D}_2 = (s_{1D}s_{2D}^{-1}; L_1 \otimes M_2, \mathbf{s}_1 \mathbf{t}_2; M_1 \otimes L_2, \mathbf{t}_1 \mathbf{s}_2)
$$
of the trivial Cartier divisor $0$.  

It then holds true, that for each place
$
v \in M_{\KK}  
$, 
\begin{align*}
\lambda_{\mathcal{D}}(\cdot;v) & := \lambda_{\mathcal{D}_1 - \mathcal{D}_2}(\cdot;v) & \\
&= \lambda_{\mathcal{D}_1}(\cdot;v) - \lambda_{\mathcal{D}_2}(\cdot;v) \\
& = \max_{\substack{k_1 = 0,\dots,n_1 \\
\ell_2 = 0,\dots,m_2}} \min_{\substack{\ell_1 = 0,\dots,m_1 \\ k_2 = 0,\dots,n_2}} \log \left| \frac{s_{1 {k_1}} t_{2 {\ell_2}}}{  t_{1{ \ell_1}} s_{2 {k_2}} \alpha} (\cdot) \right|_v \text{.}
\end{align*}

In particular 
\begin{equation}\label{motivational:local:Weil:Function:Calculation}
\lambda_{\mathcal{D}}(\cdot;v) = \log |\alpha^{-1}|_v + \left( \max_{\substack{k_1 = 0,\dots,n_1 \\
\ell_2 = 0,\dots,m_2}} \min_{\substack{\ell_1 = 0,\dots,m_1 \\ k_2 = 0,\dots,n_2}} \log \left| \frac{s_{1 {k_1}} t_{2 {\ell_2}}  }{  t_{1 {\ell_1}} s_{2 {k_2}}  }(\cdot) \right|_v \right) \text{.}
\end{equation}

We aim to give a more explicit description of the right hand side in the above expression \eqref{motivational:local:Weil:Function:Calculation}.  This is the content of Theorem \ref{explicit:local:weil:constant}.  It gives an improvement to \cite[Theorem 2.2.11]{Bombieri:Gubler}.

\begin{theorem}\label{explicit:local:weil:constant}
Let $X$ be a geometrically integral projective variety defined over $\KK$.
Let $D$ be a Cartier divisor on $X$ and defined over a finite extension $\FF / \KK$ of $\KK$.
  Fix a place $v \in M_{\KK}$ and consider two presentations
  $$
\mathcal{D}_i = (s_{iD}; L_i,\mathbf{s}_i = (s_{ii,0},\dots,s_{i,n_i}); M_i, \mathbf{t}_i = (t_{i0},\dots,t_{i,m_i})) \text{ for $i = 1,2$ }
$$
of a Cartier divisor $D$ on $X$.  Then, there exists an effectively computable constant 
$$\mathrm{O}(1) < \infty$$ 
which is such that
$$
|\lambda_{\mathcal{D}_1}(\cdot;v) - \lambda_{\mathcal{D}_2}(\cdot;v)| \leq \mathrm{O}(1) \text{.}
$$
\end{theorem}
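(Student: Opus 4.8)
The plan is to reduce, using the discussion that immediately precedes the theorem, to an effective \emph{upper} bound for a single $\max$--$\min$ expression, and then to establish that bound one affine chart at a time, combining the $\FF$-boundedness apparatus of Section \ref{FF:bounded:subsets} with the effective Nullstellensatz (Theorem \ref{effective:Hilbert:Nullstellensatz}). First I would invoke \eqref{motivational:local:Weil:Function:Calculation}, which displays $\lambda_{\mathcal{D}_1}(\cdot;v) - \lambda_{\mathcal{D}_2}(\cdot;v)$ as the explicit scalar $\log|\alpha^{-1}|_v$ plus
$$
G(x) := \max_{\substack{k_1 = 0,\dots,n_1 \\ \ell_2 = 0,\dots,m_2}} \ \min_{\substack{\ell_1 = 0,\dots,m_1 \\ k_2 = 0,\dots,n_2}} \ \log \left| \frac{ s_{1k_1} t_{2\ell_2} }{ t_{1\ell_1} s_{2k_2} }(x) \right|_v \text{.}
$$
Writing $\sigma_a := s_{1k_1} t_{2\ell_2}$ with $a = (k_1,\ell_2)$, and $\tau_b := t_{1\ell_1} s_{2k_2}$ with $b = (\ell_1,k_2)$, each of the finite families $\{\sigma_a\}$ and $\{\tau_b\}$ is a \emph{generating} family of global sections of the single globally generated line bundle $\mathcal{L} := L_1\otimes M_2 \cong M_1\otimes L_2$ on $X_{\FF}$, the isomorphism being the one induced by the two presentations (i.e.\ the datum that makes $\mathcal{D}_1 - \mathcal{D}_2$ a presentation of the trivial divisor $0$); in particular $G$ is a well defined real valued function on all of $X(\overline{\KK})$. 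Interchanging the roles of $\mathcal{D}_1$ and $\mathcal{D}_2$ replaces $G$ by $-G$ (and $\alpha$ by $\alpha^{-1}$), so it suffices to produce an effectively computable $C < \infty$ with $G(x) \leq C$ for all $x \in X(\overline{\KK})$; then $\mathrm{O}(1) := C + \bigl| \log|\alpha^{-1}|_v \bigr|$ works.

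Next I would pass to affine charts. Fix a closed embedding $X \subset \PP^n_{\KK}$ given by a very ample line bundle, with standard affine charts $V_i = X \cap \{x_i \neq 0\}$. Then $X(\overline{\KK}) = \bigcup_{i=0}^{n} E_i$ with $E_i := \{x : |x_i|_v = \max_{0\leq j\leq n} |x_j|_v\}$, and each $E_i$ is $\FF$-bounded in $V_i$ by Lemma \ref{effective:Hilb:Null:lemma:1}: the ratios $x_j/x_i$ generate $\Gamma((V_i)_{\FF},\Osh_{(V_i)_{\FF}})$ as an $\FF$-algebra and satisfy $|x_j/x_i|_v \leq 1$ on $E_i$. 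The key structural input is now to realize $\mathcal{L}$ as a rank-one locally free module over the homogeneous coordinate ring of $X$; this yields, inside each $V_i$, a finite refinement by standard affine open subsets $W_\gamma = \{x \in V_i : h_\gamma \neq 0\}$ on which $\mathcal{L}$ is trivialized, so that the $\sigma_a$ and $\tau_b$ restrict to explicit regular functions $\sigma_a^{(\gamma)}, \tau_b^{(\gamma)} \in \Gamma(W_\gamma,\Osh_{W_\gamma})$, expressible in the algebra generators of the chart, with $\{\tau_b^{(\gamma)}\}_b$ having no common zero on $W_\gamma$. Applying Lemma \ref{effective:Hilb:Null:lemma:2} to the $\FF$-bounded set $E_i$ and this standard affine cover produces $\FF$-bounded subsets $E_\gamma \subset W_\gamma(\overline{\KK})$ with $X(\overline{\KK}) = \bigcup_\gamma E_\gamma$, together with effective bounds for the values on $E_\gamma$ of any regular function on $W_\gamma$ once it is written in the generators $f_1,\dots,f_N,1/h_\gamma$.

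It then remains to estimate $G$ on each $W_\gamma$. Since $\{\tau_b^{(\gamma)}\}_b$ has no common zero on the geometrically integral affine variety $W_\gamma$, Theorem \ref{effective:Hilbert:Nullstellensatz} supplies $g_b^{(\gamma)} \in \Gamma(W_\gamma,\Osh_{W_\gamma})$ with $\sum_b g_b^{(\gamma)} \tau_b^{(\gamma)} = 1$ of effectively bounded degree and logarithmic size; multiplying by $\sigma_a^{(\gamma)}$ yields $\sigma_a^{(\gamma)} = \sum_b h_{ab}^{(\gamma)} \tau_b^{(\gamma)}$ with $h_{ab}^{(\gamma)} := \sigma_a^{(\gamma)} g_b^{(\gamma)}$ regular on $W_\gamma$. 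For $x \in E_\gamma$, choosing $b_0$ with $|\tau_{b_0}^{(\gamma)}(x)|_v$ maximal among the $|\tau_b^{(\gamma)}(x)|_v$ gives
$$
\min_b \log \left| \frac{\sigma_a}{\tau_b}(x) \right|_v \ \leq \ \log \left| \frac{\sigma_a^{(\gamma)}}{\tau_{b_0}^{(\gamma)}}(x) \right|_v \ \leq \ \log \Bigl( (\# B)^{\delta} \max_b |h_{ab}^{(\gamma)}(x)|_v \Bigr) \text{,}
$$
with $\delta = 1$ for $v$ archimedean and $\delta = 0$ otherwise. The concluding inequality of Lemma \ref{effective:Hilb:Null:lemma:2}, applied to the regular functions $h_{ab}^{(\gamma)}$ on the $\FF$-bounded set $E_\gamma$, bounds the right-hand side by an effectively computable constant, uniformly in $a$ and in $x \in E_\gamma$. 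Taking the maximum over the finitely many pairs $(a,\gamma)$ bounds $G$, and adding $\bigl| \log|\alpha^{-1}|_v \bigr|$ completes the proof.

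I expect the genuine obstacle to be the realization of $\mathcal{L}$ as a rank-one locally free module over the homogeneous coordinate ring --- effectively and with all constants tracked --- that is, writing the $\sigma_a$ and $\tau_b$ as honest regular functions in explicit algebra generators on explicit standard affine charts, and verifying that the functions $h_\gamma$ cutting out those charts, hence all the data feeding Theorem \ref{effective:Hilbert:Nullstellensatz} and Lemmas \ref{effective:Hilb:Null:lemma:1} and \ref{effective:Hilb:Null:lemma:2}, are themselves effectively available. This is precisely the ``more subtle point'' flagged in the introduction; once it is in place, the remaining estimates are routine bookkeeping with Gauss norms.
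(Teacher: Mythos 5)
Your argument is correct and follows essentially the same route as the paper: reduce via \eqref{motivational:local:Weil:Function:Calculation} to a one-sided bound for the $\max$--$\min$ expression, cover $X(\overline{\KK})$ by the $\FF$-bounded sets $E_i$ on the standard affine charts, refine to $\FF$-bounded pieces via Lemma \ref{effective:Hilb:Null:lemma:2}, and invoke the effective Nullstellensatz through Lemmas \ref{effective:Hilb:Null:lemma:1} and \ref{effective:Hilb:Null:lemma:2} together with Gauss-norm estimates. The only small deviation is cosmetic: the paper indexes the refined cover directly by the denominators $t_\ell$ (so each $g_{k\ell}=s_k/t_\ell$ is automatically regular on $U_{i\ell}$ and bounded there), whereas you pass to a general trivializing cover $\{W_\gamma\}$ and then deploy the Nullstellensatz once more to write $\sigma_a$ in terms of the $\tau_b$; both routes yield the same effective constant.
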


\begin{proof}

The goal is to establish existence of an effective constant 
$$
\mathrm{O}(1) < \infty 
$$
which is such that 
$$
- \mathrm{O}(1) \leq \lambda_{\mathcal{D}_1}(\cdot;v) - \lambda_{\mathcal{D}_2}(\cdot;v) \leq \mathrm{O}(1) \text{.}
$$

Recall that
$$
\lambda_{\mathcal{D}}(\cdot;v) = \lambda_{\mathcal{D}_1 - \mathcal{D}_2}(\cdot;v) 
$$
where
$$
\mathcal{D} := \mathcal{D}_1 - \mathcal{D}_2 = (s_{1D}s_{2D}^{-1}; L_1 \otimes M_2, \mathbf{s}_1 \mathbf{t}_2; M_1 \otimes L_2, \mathbf{t}_1 \mathbf{s}_2) \text{.}
$$
Thus, in light of \eqref{motivational:local:Weil:Function:Calculation}, 
our goal amounts to establishing existence of an effective constant 
$$\mathrm{O}(1) < \infty$$ 
which is such that
\begin{equation}\label{desired:Eqn:1}
- \mathrm{O}(1) \leq \max_{\substack{k_1 = 0,\dots,n_1 \\
\ell_2 = 0,\dots,m_2}} \min_{\substack{\ell_1 = 0,\dots,m_1 \\ k_2 = 0,\dots,n_2}} \log \left| \frac{s_{1 {k_1}} t_{2 {\ell_2}}  }{  t_{1 {\ell_1}} s_{2 {k_2}}  }(\cdot) \right|_v \leq \mathrm{O}(1) \text{.}
\end{equation}

In establishing \eqref{desired:Eqn:1}, by interchanging the role of $\mathcal{D}_1$ and $\mathcal{D}_2$, it suffices to establish the right most inequality---namely to prove existence of a constant $\mathrm{O}(1) < \infty$ which is such that 
\begin{equation}\label{desired:Eqn:2}
\max_k \min_{\ell} \log \left| \frac{s_k}{t_\ell}(\cdot) \right|_v \leq \mathrm{O}(1) \text{.}
\end{equation} 
Here, in \eqref{desired:Eqn:2}, to reduce subscript notation we put 
$$s_k := s_{1 {k_1}} t_{2 {\ell_2}} \text{ for $k = 0,\dots, n_1m_2$}$$ 
and 
$$t_\ell := t_{1 {\ell_1}} s_{2 {k_2}} \text{ for $\ell = 0,\dots, m_1 n_2$.}$$

To establish the upper bound \eqref{desired:Eqn:2}, choose a closed embedding of $X$ into some projective space $\PP^N$
$$
X \hookrightarrow \PP^N_{\KK} = \PP^N_{[x_0:\dots:x_N]} \text{.}
$$
For each $i=0,\dots,N$, let 
$$U_i \subset X_{\FF}$$ 
be the affine open subset
$$
U_i := \left\{ x = [x_0:\dots : x_N] \in X_{\FF} : x_i \not = 0 \right\} \text{.}
$$
Then on the affine open subsets
$$
U_{i\ell} := \{x \in U_i : t_{\ell}(x) \not = 0 \} 
$$
the functions
$$
g_{k \ell} := s_k / t_{\ell}
$$
are regular.

On the other hand, the functions
$$
f_{ij} := \frac{x_j}{x_i} \text{,}
$$
for $j = 0,\dots,N$, and each fixed $ i = 0, \dots, N$, generate $\Gamma(U_i,\Osh_{U_i})$ as an $\FF$-algebra.

So, define the sets $E_i$, for each $i = 0,\dots,N$, by the condition that
$$
E_i := \left\{ x = [x_0:\dots:x_N] \in X(\overline{\KK}) : |x_i|_v = \max_j |x_j|_v \right\} \text{.}
$$
In particular, by construction, note that if $x \in E_i$, then
$$
\max_{j=0,\dots,N} \left| f_{ij}(x) \right|_v = 1 \text{.}
$$

It follows that each of the sets $E_i$ are $\FF$-bounded in $U_i$, for $i = 0,\dots,N$.  Thus, there exists a standard affine covering $\{U_{i \ell}\}$, of each of the sets $U_i$, and $\FF$-bounded subsets
$$
E_{i \ell} \subset U_{i\ell}
$$
which are such that 
$$
E_i = \bigcup_{i} E_{i\ell}
$$
and 
$$
\sup_{x \in E_{i \ell}} \max_k \left| g_{k \ell}(x) \right|_v < \infty \text{.}
$$

Thus, since the sets $E_{i \ell}$ cover $X(\overline{\KK})$, the existence of the desired finite and effectively computable constant $\gamma < \infty$ then follows as in Lemma \ref{effective:Hilb:Null:lemma:2}.

In more explicit terms, fix $\FF$-algebra generators
$$
f_{i0} := \frac{x_0}{x_i} , \dots, \hat{f}_{ii},\dots,f_{iN}:= \frac{x_N}{x_i}, f_{iN+1} := \frac{1}{h_{i\ell}}
$$
for each $\FF$-algebra $\Gamma(U_{i\ell},\Osh_{U_{i\ell}})$.  Here
$$
h_{i\ell} \in \Gamma(U_\ell,\Osh_{U_\ell})
$$
is a suitable regular function and the notation 
$
\hat{f}_{ii} 
$
means omit
$$
f_{ii} := \frac{x_i}{x_i} = 1
$$
in the chosen generating set.

Then, if 
$$
g_{k\ell} \in \Gamma(U_{i\ell},\Osh_{U_{i\ell}})
$$
is written in the form
$$
g_{k\ell} = q_{ik\ell}(f_{i0},\dots,\hat{f_{ii}},\dots,f_{iN},f_{iN+1}) \text{,}
$$
for
$$
q_{ik\ell} = q_{ik\ell}(X_1,\dots,X_{N+1}) \in \FF[X_1,\dots,X_{N+1}] \text{,}
$$
then 
\begin{multline*}
\sup_{x \in E_{i \ell}} \left| g_{k\ell}(x) \right|_v \leq \\ 
\left( \# \operatorname{supp}(q_{ik\ell}) \right)^{\delta} \left|q_{ik\ell} \right|_v \max\left(1,\sup_{x \in E_{\ell}} \max_{j=0,\dots, \hat{i}, \dots,N+1} |f_{ij}(x)|_v\right)^{\deg q_{ik\ell}} \text{.}
\end{multline*}

The above discussion then shows that the constant $\mathrm{O}(1)$ that arises in \eqref{desired:Eqn:2} may be taken to satisfy the inequality
\begin{multline*}
\mathrm{O}(1) \leq \\
\max_{k,\ell,i} \log^+ \left( \left( \# \operatorname{supp}(q_{ik\ell}) \right)^\delta |q_{ik\ell}|_v \max\left(1,\sup_{x \in E_{\ell}} \max_{j=0,\dots, \hat{i}, \dots,N+1} |f_{ij}(x)|_v\right)^{\deg q_{ik\ell}} \right) \text{.}
\end{multline*}
The existence and effectivity of the constant $\mathrm{O}(1)$ that arises in the conclusion of Theorem \ref{explicit:local:weil:constant} then follows too because of \eqref{desired:Eqn:1} and \eqref{motivational:local:Weil:Function:Calculation}.
\end{proof}

\providecommand{\bysame}{\leavevmode\hbox to3em{\hrulefill}\thinspace}
\providecommand{\MR}{\relax\ifhmode\unskip\space\fi MR }
\providecommand{\MRhref}[2]{%
  \href{http://www.ams.org/mathscinet-getitem?mr=#1}{#2}
}
\providecommand{\href}[2]{#2}

\end{document}